\documentclass[11p,leqno]{amsart}
\textheight 8.5in
\textwidth 5.5 in
\voffset -0.3in
\hoffset -0.6in
\usepackage{amsmath}
\usepackage{amsfonts}
\usepackage{amssymb}
\usepackage{graphicx}
\usepackage{color}
\usepackage[shortalphabetic]{amsrefs}
\parindent 6pt
\parskip 4pt

\newtheorem{theorem}{Theorem}[section]
\newtheorem*{theorem*}{Theorem}

\newtheorem{corollary}[theorem]{Corollary}
\newtheorem{proposition}{Proposition}[section]

\theoremstyle{definition}

\theoremstyle{remark}
\newtheorem{remark}[theorem]{Remark}

%%%%%%%%% Proof  environment

\newcommand{\RR}{\mathbb{R}}
\newcommand{\E}{\mathrm{e}}

               %%% probability measures
            %%% with finite second moments

\numberwithin{equation}{section}

\begin{document}
\title[Modulus of continuity]{\bf Eigenvalue comparison on Bakry-Emery manifolds}

\author{Ben Andrews}
\address{Mathematical Sciences Institute, Australia National University; Mathematical Sciences Center, Tsinghua University; and Morningside Center for Mathematics, Chinese Academy of Sciences.}
\email{Ben.Andrews@anu.edu.au}
\thanks{Partially supported by Discovery Projects grant DP0985802 of the Australian Research Council.}
\author{Lei Ni}
\address{Department of Mathematics, University of California at San Diego, La Jolla, CA 92093, USA}
\email{lni@math.ucsd.edu}
\thanks{Partially supported by NSF grant DMS-1105549.}

\maketitle

\section{A lower bound for the first eigenvalue of the drift Laplacian}

Recall that  $(M, g, f)$, a triple consisting of a manifold $M$, a Riemannian metric $g$ and a smooth function $f$,   is called a  gradient Ricci soliton if the Ricci curvature and the Hessian of $f$ satisfy:
\begin{equation}\label{sgs}
{\rm Rc}_{ij}+f_{ij}=a g_{ij}.
\end{equation}
It is called shrinking, steady, or expanding soliton if $a>0$, $a=0$ or $a<0$ respectively.
In this paper we apply the modulus of continuity estimates developed in \cites{AC1,AC2, AC3} to give an eigenvalue estimate on gradient  solitons for the operator $\Delta_f\doteqdot \Delta -\langle \nabla(\cdot), \nabla f \rangle$ on strictly convex $\Omega\subset M$ with diameter $D$ and smooth boundary. In fact the result works for manifolds with lower bound on the  so-called Bakry-Emery Ricci tensor, namely ${\rm Rc}_{ij}+f_{ij}\ge a g_{ij}$ for some $a\in \mathbb{R}$. An earlier result of this kind was obtained in \cite{FS} for shrinking solitons.

In this  section, we extend a comparison theorem of \cite{AC3} on the modulus of continuity to manifolds with lower bound on the co-called Bakry-Emery Ricci tensor. The eigenvalue comparison result for manifolds with lower bound on the Bakry-Emery tensor generalizes the earlier lower estimates of Payne-Weinberger\cite{PW}, Li-Yau\cite{LY} and Zhong-Yang\cite{ZY}. A consequence of this is a lower diameter estimate for nontrivial gradient shrinking solitons, which improves \cite{FS} with a different approach and a rather short argument.  We remark here that the eigenvalue estimate we obtain is sharp for $(M,g,f)$ satisfying the Bakry-Emery-Ricci lower bound ${\rm Rc}_{ij}+f_{ij}\geq ag_{ij}$, but presumably is not so for Ricci solitons where the Bakry-Emery-Ricci tensor is constant, and so we expect that our diameter bound is also not sharp.  We discuss the sharpness of the eigenvalue inequality in section \ref{sec:sharp}.

Before we state the result, we define a corresponding $1$-dimensional eigenvalue problem. On $[-\frac{D}{2}, \frac{D}{2}]$ we consider the functionals
$$\mathcal{F}(\psi)=\int_{\frac D 2}^{\frac D 2} \E^{-\frac{a}{2} s^2} (\psi')^2\, ds, \quad \text{ and } \quad    \mathcal{R}(\psi)=\frac{\mathcal{F}(\psi)}{\int \E^{-\frac{a}{2}s^2}\psi^2ds}, $$  namely the Dirichlet energy with weight $\E^{-\frac{a}{2} s^2}$ and its Rayleigh quotient. The  associated elliptic operator is  $\mathcal{L}_{a}=\frac{d^2}{ds^2}-a\, s \frac{d}{ds}$. Let $\bar{\lambda}_{a,D}$ be the first non-zero Neumann eigenvalue of $\mathcal{L}_{a}$, which is the minimum of $\mathcal{R}$ among $W^{1, 2}$-functions with zero average.

\begin{theorem}\label{sgs-thm1} Let $\Omega$ be a compact manifold $M$,  or a bounded strictly convex domain inside a complete manifold $M$, satisfying that ${\rm Rc}_{ij}+f_{ij}\ge a\, g_{ij}$. Assume that $D$ is the diameter of $\Omega$. Then the first non-zero Neumann eigenvalue $\widetilde{\lambda}_1$ of the operator $\Delta_f$ is at least $\bar{\lambda}_{a,D}$.
\end{theorem}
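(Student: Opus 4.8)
The strategy I would use is the two-point (``doubling'') maximum principle of Andrews--Clutterbuck, adapted to $\Delta_f$ and to the Bakry--Emery bound. Let $u$ be a first non-zero Neumann eigenfunction of $\Delta_f$ on $\Omega$ (smooth, with zero weighted average), so $\Delta_f u=-\widetilde\lambda_1 u$ and $\p_\nu u=0$, and set $U(x,t)=\E^{-\widetilde\lambda_1 t}u(x)$, a solution of $\p_t U=\Delta_f U$ with Neumann conditions. Let $\phi$ be the first non-zero Neumann eigenfunction of $\mathcal L_a$ on $[-\tfrac D2,\tfrac D2]$, normalized to be odd with $\phi'>0$ on $(-\tfrac D2,\tfrac D2)$ (standard Sturm--Liouville facts, using the symmetry $s\mapsto -s$), and put $\Phi(s,t)=C\,\E^{-\bar\lambda_{a,D}t}\phi(s)$, which solves $\p_t\Phi=\mathcal L_a\Phi$. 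The goal is to show that for $C$ chosen large,
\[
U(y,t)-U(x,t)\le 2\,\Phi\!\lf(\tfrac{d(x,y)}{2},t\ri)\qquad\text{for all }x,y\in\ov\Omega,\ t\ge 0.
\]
Granting this, evaluating at points realizing $\max u$ and $\min u$ and using $d\le D$ and the monotonicity of $\phi$ gives $(\max u-\min u)\,\E^{-\widetilde\lambda_1 t}\le 2C\phi(\tfrac D2)\,\E^{-\bar\lambda_{a,D}t}$ for all $t$, which as $t\to\infty$ is violated if $\widetilde\lambda_1<\bar\lambda_{a,D}$. Hence $\widetilde\lambda_1\ge\bar\lambda_{a,D}$.

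For the comparison, first fix $C$ so that it holds at $t=0$: near the diagonal $2\Phi(\tfrac d2,0)\ge c\,d$ for some $c>0$ (since $\phi'(0)>0$) while $u$ is Lipschitz, and away from the diagonal $\Phi(\tfrac d2,0)$ is bounded below by a positive constant, so both requirements are met for $C$ large. Now fix $T,\eps>0$ and set $Q(x,y,t)=U(y,t)-U(x,t)-2\Phi(\tfrac{d(x,y)}{2},t)-\eps\,\E^{t}$ on $\ov\Omega\times\ov\Omega\times[0,T]$. Then $Q<0$ at $t=0$ and on the diagonal (as $\phi(0)=0$), so it suffices to contradict the existence of a maximum point $(x_0,y_0,t_0)$ of $Q$ with $Q>0$, $x_0\ne y_0$, $t_0>0$. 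When $\Omega=M$ is closed there is no boundary issue; when $\Omega$ is strictly convex, the strict convexity of $\p\Omega$ together with $\p_\nu U=0$ forces $x_0,y_0$ into the interior, exactly as in \cite{AC3}. Let $\g:[0,r_0]\to M$, $r_0=d(x_0,y_0)\le D$, be a unit-speed minimizing geodesic from $x_0$ to $y_0$, and $E_1,\dots,E_{n-1}$ parallel orthonormal fields along $\g$ with $E_i\perp\g'$. The first variations of $Q$ in $x$ and in $y$ give $\nabla U(x_0)=\Phi'(\tfrac{r_0}{2},t_0)\,\g'(0)$ and $\nabla U(y_0)=\Phi'(\tfrac{r_0}{2},t_0)\,\g'(r_0)$.

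Now I use $\H Q\le 0$ at the maximum. Varying $x$ along $\exp_{x_0}(-s\g'(0))$ and $y$ along $\exp_{y_0}(s\g'(r_0))$, along which $d(x,y)=r_0+2s$, gives
\[
\H U(\g',\g')\big|_{y_0}-\H U(\g',\g')\big|_{x_0}\le 2\,\Phi''\!\lf(\tfrac{r_0}{2},t_0\ri).
\]
Varying $x$ along $\exp_{x_0}(sE_i(0))$ and $y$ along $\exp_{y_0}(sE_i(r_0))$: the first variation of $d$ vanishes and its second variation is at most the index form $-\int_0^{r_0}\langle\Rm(E_i,\g')\g',E_i\rangle\,ds$; summing over $i$ and using the previous inequality yields $\D U(y_0)-\D U(x_0)\le 2\Phi''(\tfrac{r_0}{2},t_0)-\Phi'(\tfrac{r_0}{2},t_0)\int_0^{r_0}\Ric(\g',\g')\,ds$. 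The drift terms, evaluated via $\nabla U=\Phi'(\tfrac{r_0}{2},t_0)\g'$ at both endpoints, contribute $-\Phi'(\tfrac{r_0}{2},t_0)\int_0^{r_0}\H f(\g',\g')\,ds$, so
\[
\Delta_f U(y_0)-\Delta_f U(x_0)\le 2\,\Phi''\!\lf(\tfrac{r_0}{2},t_0\ri)-\Phi'\!\lf(\tfrac{r_0}{2},t_0\ri)\int_0^{r_0}\big(\Ric+\H f\big)(\g',\g')\,ds.
\]
This is the one place the hypothesis enters: $\Ric+\H f\ge a\,g$ and $\Phi'\ge 0$ force the integral to be $\ge a r_0$, so the right side is $\le 2\Phi''(\tfrac{r_0}{2},t_0)-a r_0\,\Phi'(\tfrac{r_0}{2},t_0)=2(\mathcal L_a\Phi)(\tfrac{r_0}{2},t_0)=2\,\p_t\Phi(\tfrac{r_0}{2},t_0)$. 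Hence $\p_t Q(x_0,y_0,t_0)=\Delta_f U(y_0)-\Delta_f U(x_0)-2\,\p_t\Phi(\tfrac{r_0}{2},t_0)-\eps\,\E^{t_0}\le -\eps\,\E^{t_0}<0$, which is impossible at a maximum with $t_0>0$. Letting $\eps\downarrow 0$ and $T\uparrow\infty$ proves the comparison.

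Two routine but delicate points are glossed above. First, the reduction to an interior maximum when $\p\Omega\ne\emptyset$: this is exactly where strict convexity is essential and where the Neumann condition is consumed, and I expect it to be the main obstacle to writing the proof in full; I would import the argument of \cite{AC3} rather than redo it. Second, $d(\cdot,\cdot)$ is only Lipschitz at cut-locus pairs, so the Hessian step must be run with a smooth upper support function for $d$ built along $\g$ (Calabi's trick), which only strengthens the inequalities used. By contrast the analytic heart — the index-form estimate supplying the Ricci term, and the drift term combining with it into the Bakry--Emery lower bound, matched exactly by the Gaussian-weighted operator $\mathcal L_a$ — is a short and essentially forced computation, which is why the resulting eigenvalue inequality is sharp.
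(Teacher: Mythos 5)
Your argument follows the paper's strategy closely: set $U(x,t)=\E^{-\widetilde\lambda_1 t}u(x)$, compare against $\Phi(s,t)=C\E^{-\bar\lambda t}\phi(s)$ via the two-point function $Q(x,y,t)=U(y,t)-U(x,t)-2\Phi\bigl(\tfrac{d(x,y)}{2},t\bigr)-\eps\E^t$, and at an interior maximum combine the first-variation identity $\nabla U=\Phi'\gamma'$ at both endpoints with the index-form second variation so that $\Ric+\H f\ge a\,g$ enters exactly as $-ar_0\Phi'$. All of that matches the paper's Proposition 1.2 and its application.

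There is, however, one genuine gap. You take $\phi$ to be the first Neumann eigenfunction of $\mathcal L_a$ on the interval $[-\tfrac D2,\tfrac D2]$ itself, and you note $\phi'>0$ only on the \emph{open} interval: necessarily $\phi'(\pm D/2)=0$. But the boundary-exclusion step requires $\Phi'\bigl(\tfrac{d(x_0,y_0)}{2},t_0\bigr)>0$ \emph{strictly}. Concretely, when $x_0\in\partial\Omega$ the maximality of $Q$ and the Neumann condition $\partial_\nu U=0$ give $-\Phi'\langle\nabla_x d,\nu\rangle\ge 0$; strict convexity gives $\langle\nabla_x d,\nu\rangle>0$, so one concludes $\Phi'\le 0$, a contradiction only if $\Phi'>0$. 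For a bounded strictly convex domain the diameter is realized by a pair of boundary points, so $d(x_0,y_0)=D$ with $x_0,y_0\in\partial\Omega$ is a real possibility, and there $\Phi'(D/2)=0$: no contradiction. Your ``import the argument of \cite{AC3}'' does not cure this, because it is not the boundary mechanics but the choice of $\phi$ that fails. The paper's fix is to run the whole argument with the eigenfunction $\bar\omega^{D'}$ on $[-\tfrac{D'}{2},\tfrac{D'}{2}]$ for $D'>D$, for which $(\bar\omega^{D'})'(s)>0$ on the \emph{closed} interval $[0,D/2]$, conclude $\widetilde\lambda_1\ge\bar\lambda_{a,D'}$, and then let $D'\downarrow D$ using continuity of $\bar\lambda_{a,D'}$ in $D'$. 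With that one modification your argument is complete; the rest of the analytic content (first/second variation, index form, Bakry--Emery lower bound matching $\mathcal L_a$, oscillation decay as $t\to\infty$) is correct and is the same as the paper's.
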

\begin{proof} First we extend Theorem 2.1 of \cite{AC3} to this setting.  Recall that $\omega$ is a modulus of continuity for a function $f$ on $M$ if for all $x$ and $y$ in $M$, $|f(y)-f(x)|\leq 2\omega\left(\frac{d_M(x,y)}{2}\right)$.

\begin{proposition}\label{ex-AC-com}Let $v(x,t)$ be  a solution to
 \begin{equation}\label{heat-drift}
\frac{\partial v}{\partial t}=\Delta v-2\langle X, \nabla v\rangle
\end{equation}
  with $2X=\nabla f$. Assume also $v(x,t)$ satisfies the Neumann boundary condition. Suppose that $v(x, 0)$ has a modulus of continuity $\varphi_0(s): [0, \frac{D}{2}]\to \mathbb{R}$ with $\varphi_0(0)=0$ and $\varphi_0'>0$ on $[0, \frac{D}{2}]$. Assume further that there exists a
function $\varphi(s, t):[0, \frac{D}{2}]\times \mathbb{R}_+\to \mathbb{R}$ such that
 \begin{itemize}
\item[(i)] $\varphi(s,0) = \varphi_0(s)$ on $ [0,D/2]$;
\item[(ii)] $\frac{\partial \varphi}{\partial t}\ge \varphi'' -a\, s\, \varphi'$ on
$[0, D/2]\times \mathbb{R}_+$;
\item[(iii)] $\varphi'(s,t)>0$ on $[0, \frac{D}{2}]$;
\item
[(iv)] $\varphi(0, t) \ge 0$ for each $t\ge 0$.
\end{itemize}
Then $\varphi(s, t)$ is a modulus of the continuity of $v(x, t)$ for $t>0$.
 \end{proposition}

 The proof to the proposition is a modification of the argument to Theorem 2.1 of \cite{AC3}. Precisely, consider $$
 \mathcal{O}_\epsilon(x, y, t)\doteqdot v(y, t)-v(x, t)-2\varphi\left(\frac{r(x, y)}{2}, t\right)-\epsilon \E^t
  $$
  and it suffices to prove that the maximum of $\mathcal{O}_\epsilon(\cdot, \cdot, t)$ is non-increasing in $t$. The strictly convexity, the Neumann boundary condition satisfied by $v(x, t)$, and the positivity of $\varphi'$ rule out the possibility that the maximum can be attained at $(x_0, y_0)\in \partial (\Omega\times \Omega)$. For the interior pair $(x_0, y_0)$ where the maximum of  $\mathcal{O}_\epsilon(\cdot, \cdot, t)>0$ is attained, pick a frame $\{e_i\}$ as before at $x_0$ and parallel translate it along a minimizing geodesic $\gamma(s):[0, d]\to M$ joining $x_0$ with $y_0$. Still denote it by $\{e_i\}$. Let $\{E_i\}$ be the frame at $(x_0, y_0)$ (in $T_{(x_0, y_0)}\Omega\times \Omega$) as in Section 2. Direct calculations show that at $(x_0, y_0)$,
  \begin{eqnarray*}
  \left(\frac{\partial}{\partial t}-\sum_{j=1}^n \nabla^2_{E_j E_j} \right)\mathcal{O}_\epsilon (x, y, t)
  &=& -\left(\langle \nabla f(y), \gamma'\rangle-\langle \nabla f(x), \gamma'\rangle \right) \varphi' \\
  &\,&+\varphi'\sum_{i=1}^{n-1}\nabla^2_{E_i E_i} r(x, y) -2\varphi_t+2\varphi''-\epsilon \E^t.
  \end{eqnarray*}
Here we have used the first variation $\nabla \mathcal{O}(\cdot, \cdot, t)=0$ at $(x_0, y_0)$ which implies the identities
$$
(\nabla v)(y, t)=\varphi' \, \gamma'(d),\quad \quad  (\nabla v)(x, t)=\varphi'\,  \gamma'(0).
$$
Now choose the variational vector field $V_i(s)=e_i(s)$, the parallel transport of $e_i$ along $\gamma(s)$, along $\gamma(s)$, the second variation computation gives  that
$$
\sum_{i=1}^{n-1}\nabla^2_{E_i E_i} r(x, y) \le -\int_0^d \operatorname{Rc}(\gamma', \gamma')\, ds.
$$
Hence at $(x_0, y_0)$ we have that
\begin{eqnarray*}
 \left(\frac{\partial}{\partial t}-\sum_{j=1}^n \nabla^2_{E_j E_j} \right)\mathcal{O}(x, y, t)
&\le& -\varphi'\int_0^d (\nabla^2 f+\operatorname{Rc})(\gamma', \gamma')\, ds -2\varphi_t+2\varphi''\\
&\le& -\varphi' a r(x, y)-2\varphi_t+2\varphi''\\
&\le & 0.
\end{eqnarray*}
Here we have used $d=r(x, y)$ and $s=\frac{r(x, y)}{2}$.
This is enough to prove the proposition.

To prove the theorem, let $\bar{\omega}(s)$ be the first non-constant eigenfunction for $\mathcal{L}_{a}$, which can be chosen to be positive on $(0, \frac{D}{2})$. To apply the proposition we consider $\bar{\omega}^{D'}(s)$ be the eigenfunction on $[-\frac{D'}{2},\frac{D'}{2}]$ with the corresponding eigenvalue $\bar{\lambda}_a^{D'}$. Let $\varphi(x, t)=C \E^{-\bar{\lambda}^{D'}_a t} \bar{\omega}^{D'}(s)$. Let $w(x)$ be the first non-constant eigenfunction of $\Delta_f$ and let $v(x, t)=e^{-\widetilde{\lambda}_1 t}w(x)$. Since $\bar{\omega}^{D'}(s)$ is an odd function (by adding an eigenfunction $\psi(s)$ with $\psi(-s)$ one can always obtain one), we do have $\varphi(0, t)=0$. The possibility of choosing $(\bar{\omega}^{D'})'(s)> 0$ on $[0,\frac{D}{2}]$ can be proved as follows. By the uniqueness, we have that  $(\bar{\omega}^{D'})'(0)>0$. It suffices to show that  $(\bar{\omega}^{D'})'(s)>0$ for $s\in (0, \frac{D'}{2})$.
 Also observe that $\bar{\omega}^{D'}(\frac{D'}{2})>0$. By the ODE $\mathcal{L}_a\bar{\omega}^{D'}=-\bar{\lambda} \bar{\omega}^{D'}$ we can conclude that $(\bar{\omega}^{D'})'>0$ on $[\frac{D'}{2}-\epsilon, \frac{D'}{2})$. Let $y(s)=(\bar{\omega}^{D'})'(s)$.
The ODE also  forces $y>0$ for $s\in (0, \frac{D'}{2})$ since otherwise we assume $s_1<\frac{D'}{2}$ is the biggest zero. Note that $\bar{\omega}^{D'}(s_1)<0$ and it is strictly convex near $s_1$. Now clearly near $s_1$ one can raise the value of $\bar{\omega}^{D'}$ by replacing part of the graph with a line interval parallel to the $x$-axis, hence lower the energy $\mathcal{F}$. This contradicts  the fact that  $\bar{\lambda}$ is the minimum of the quotient $\mathcal{R}(\psi)$   among all nonzero $W^{1,2}(\E^{-\frac{a}{2}s^2}ds)$ function with zero average.

 Finally as before the proposition implies that for sufficient large $C$, $\varphi(s, t)$ is a modulus of continuity of $v(x, t)$. Hence $\widetilde{\lambda}_1\ge \bar{\lambda}_{a,D'}$. The claimed result follows by letting $D'\to D$.
 \end{proof}

\section{Sharpness of the lower bound}\label{sec:sharp}

In this section we show that (for $n\geq 3$ for any $a$ or for $n\geq 2$ for $a\leq 0$) the lower bound $\tilde\lambda_1\geq \bar\lambda_a$ given in Theorem \ref{sgs-thm1} is sharp:  Precisely, for each $\varepsilon>0$ we construct a Bakry-Emery manifold $(M,g,f)$ with diameter $D$ and $\tilde\lambda_1<\bar\lambda_{a,D}+\varepsilon$.

We will construct a smooth manifold $M$ which is approximately a thin cylinder with hemispherical caps at each end.  Let $\gamma$ be the curve in $\RR^2$ with curvature $k$ given as function of arc length as follows for suitably small positive $r$ and $\delta>0$ small compared to $r$:
\begin{equation}\label{eq:k}
k(s) = \begin{cases}
\frac{1}{r},&s\in\left[0,\frac{\pi r}{2}-\delta\right];\\
\varphi\left(\frac{s-\frac{\pi r}{2}}{\delta}\right)\frac1r,&s\in\left[\frac{\pi r}{2}-\delta, \frac{\pi r}{2}+\delta\right];\\
0,&s\in\left[\frac{\pi r}{2}+\delta,\frac{D}{2}\right],
\end{cases}
\end{equation}
extended to be even under reflection in both $s=0$ and $s=D/2$.  This corresponds to a pair of line segments parallel to the $x$ axis, capped by semicircles of radius $r$ and smoothed at the joins.  We write the corresponding embedding $(x(s),y(s))$.
Here $\varphi$ is a smooth nonincreasing function with $\varphi(s)=1$ for $s\leq -1$, $\varphi(s)=0$ for $s\geq 1$, and satisfying $\varphi(s)+\varphi(-s)=1$.
We choose the point corresponding to $s=0$ to have $y(0)=0$ and $y'(0)=1$.  The manifold $M$ will then be the hypersurface of rotation in $\RR^{n+1}$ given by
$\{(x(s),y(s)z):\ s\in\RR,\ z\in \mathbb{S}^{n-1}\}$.  On $M$ we choose the function $f$ to be a function of $s$ only, such that
\begin{equation}\label{eq:f}
f''=\begin{cases}
a\left(1-\frac{D}{\pi r}\right),& s\in\left[0,\frac{\pi r}{2}-\delta\right];\\
\varphi\left(\frac{s-\frac{\pi r}{2}}{\delta}\right)a\left(1-\frac{D}{\pi r}\right)
+\left(1-\varphi\left(\frac{s-\frac{\pi r}{2}}{\delta}\right)\right)a,&s\in\left[\frac{\pi r}{2}-\delta, \frac{\pi r}{2}+\delta\right];\\
a,&s\in\left[\frac{\pi r}{2}+\delta,\frac{D}{2}\right],
\end{cases}
\end{equation}
with $f'(0)=0$ (the value of $f(0)$ is immaterial).  Note that this choice gives $f'(D/2)=0$.  We extend $f$ to be even under reflection in $s=0$ and $s=D/2$.

With these choices we compute the Bakry-Emery-Ricci tensor and verify that the eigenvalues are no less than $a$ for suitable choice of $r$.  The eigenvalues of the second fundamental form are
$k(s)$ (in the $s$ direction) and $\frac{\sqrt{1-(y')^2}}{y}$ in the orthogonal directions.  Therefore the Ricci tensor has eigenvalues $(n-1)k\frac{\sqrt{1-(y')^2}}{y}$ in the $s$ direction, and $k\frac{\sqrt{1-(y')^2}}{y}+(n-2)\frac{1-(y')^2}{y^2}$ in the orthogonal directions.
We can also compute the eigenvalues of the Hessian of $f$:  The curves of fixed $z$ in $M$ are geodesics parametrized by $s$, the the Hessian in this direction is just $f''$ as given above.  Since $f$ depends only on $s$ we also have that $\nabla^{2}f(\partial_{s},e_{i})=0$ for $e_{i}$ tangent to $\mathbb{S}^{n-1}$, and $\nabla^{2}f(e_{i},e_{j})=\frac{y'}{y}f'\delta_{ij}$.

The identities $y(s)=\int_0^s\cos\left(\theta(\tau)\right)\,d\tau$ and $y'(s) = \cos\left(\theta(s)\right)$ where $\theta(s)=\int_0^sk(\tau)\,d\tau$ applied to \eqref{eq:k} imply that
$$
y=\begin{cases}r\sin(s/r),&s\in\left[0,\frac{\pi r}{2}-\delta\right];\\
r(1+o(\delta)),&s\in\left[\frac{\pi r}{2}-\delta, \frac{D}{2}\right],
\end{cases}\qquad
y'=\begin{cases}
\cos(s/r),&s\in\left[0,\frac{\pi r}{2}-\delta\right];\\
o(\delta),&s\in\left[\frac{\pi r}{2}-\delta, \frac{\pi r}{2}+\delta\right];\\
0,&s\in\left[\frac{\pi r}{2}+\delta,\frac{D}{2}\right],
\end{cases}
$$
as $\delta$ approaches zero.
This gives the following expressions for the Bakry-Emery Ricci tensor $\text{\rm Rc}_f=\text{\rm Rc}+\nabla^2f$:
$$
\text{\rm Rc}_f(\partial_s,\partial_s)
=\begin{cases}
a+\frac{n-1}{r^2}-\frac{aD}{\pi r},&s\in\left[0,\frac{\pi r}{2}-\delta\right];\\
a+\varphi\left(\frac{2-\frac{\pi r}{2}}{\delta}\right)\left(\frac{n-1}{r^2}(1+o(\delta))-\frac{aD}{\pi r}\right)&s\in\left[\frac{\pi r}{2}-\delta, \frac{\pi r}{2}+\delta\right];\\
a,&s\in\left[\frac{\pi r}{2}+\delta,\frac{D}{2}\right],
\end{cases}
$$
and
$$
\text{\rm Rc}_f(e,e)=
\begin{cases}
\frac{n-1}{r^2}+\frac{as\left(1-\frac{D}{\pi r}\right)}{r\tan\left(s/r\right)},&s\in\left[0,\frac{\pi r}{2}-\delta\right];\\
\frac{n-2}{r^2}+o(\delta),&s\in\left[\frac{\pi r}{2}-\delta, \frac{\pi r}{2}+\delta\right];\\
\frac{n-2}{r^2}(1+o(\delta))&s\in\left[\frac{\pi r}{2}+\delta,\frac{D}{2}\right],
\end{cases}
$$
while $\text{\rm Rc}_f(\partial_s,e)=0$, for any unit vector $e$ tangent to $\mathbb{S}^{n-1}$.  In particular we have $\text{\rm Rc}_f\geq ag$ for sufficiently small $r$ and $\delta$ for any $a\in\RR$ if $n\geq 3$, and for $a<0$ if $n=2$.  Note also that the diameter of the manifold $M$ is $D(1+o(\delta))$.

Having constructed the manifold $M$, we now prove that for this example the first non-trivial eigenvalue $\tilde\lambda_1$ of $\Delta_f$ can be made as close as desired to $\bar\lambda_{a,D}$ by choosing $r$ and $\delta$ small.   Theorem \ref{sgs-thm1} gives the upper bound $\tilde\lambda_1\geq \bar\lambda_{a,D(1+o(\delta))}=\bar\lambda_{a,D}+o(\delta)$.  To prove an upper bound we can simply find a suitable test function to substitute into the Rayleigh quotient which defines $\tilde\lambda_1$:  We set
$$
\psi(s,z) = \begin{cases}w(s-D/2),& \frac{\pi r}{2}+\delta\leq s\leq D-(\frac{\pi r}{2}+\delta);\\
w\left(\frac{D}{2}-\frac{\pi r}{2}-\delta\right),& 0\leq s\leq \frac{\pi r}{2}+\delta, \mbox{ and } D-\frac{\pi r}{2}-\delta \le s\le D.
\end{cases}
$$
where $w$ is the solution of $w''-asw'+\bar\lambda_{a,D-\pi r-2\delta}w=0$ with $w(0)=0$ and $w'\left(\frac{D}{2}-\frac{\pi r}{2}-\delta\right)=0$ and $w'(0)=1$.  This choice gives
$$
{\mathcal R}(\psi) = \frac{
\bar\lambda_{a,D-\pi r-2\delta}\int_{\{|s-D/2|\leq \frac{D}{2}-\frac{\pi}{2r}-\frac{\delta}{2}\}}\psi^2\E^{-f}dVol(g)
}{
\int_{\{|s-D/2|\leq D/2\}}\psi^2\E^{-f}dVol(g)
}\leq \bar\lambda_{a,D-\pi r-2\delta}.
$$
It follows that $\tilde\lambda_1\to\bar\lambda_{a,D}$ as $r$ and $\delta$ approach zero, proving the sharpness of the lower bound in Theorem \ref{sgs-thm1}.

\begin{remark}
If we allow manifolds with boundary the construction is rather simpler:  Simply take a cylinder $r\mathbb{S}^{n-2}\times[-D/2,D/2]$ for small $r$, with quadratic potential $f=\frac{a}{2}s^2$, and substitute the test function $\psi(z,s)=w(s)$ defined above.
\end{remark}

\section{A linear lower bound}\label{sec:linear}

Concerning the lower estimate of $\bar{\lambda}_a$, at least for $a\ge 0$, note that  $y=(\bar{\omega}^{D'})'$ satisfies that
$$
\left(\E^{-\frac{a}{2} s^2}y'\right)'=(a-\bar{\lambda}_a)\E^{-\frac{a}{2} s^2}y.
$$
This together with the maximum principle applying to $y^2$ implies that $\bar{\lambda}_a\ge a$. Applying Proposition \ref{ex-AC-com} to the trivial case $M=\mathbb{R}$ with $\varphi(x, t)=\E^{-(\frac{\pi}{D'})^2t} \sin (\frac{\pi}{D'} s)$, and letting $D'\to D$ we also get $\bar{\lambda}_a\ge \frac{\pi^2}{D^2}$.

On the other hand, a normalization procedure reduces the problem of finding/estimation of the first nontrivial Neumann eigenvalue for the involved ODE to finding the first nontrivial Neumann eigenvalue $\bar{\lambda}_{2, \sqrt{\frac{a}{2}} D}$ for the Hermite equation: $\frac{d^2}{ds^2}-2s\frac{d}{ds}$ on the interval $[-\sqrt{\frac{a}{2}}\frac{D}{2}, \sqrt{\frac{a}{2}}\frac{D}{2}]$ since $\bar{\lambda}_a=\frac{a}{2}\bar{\lambda}_{2, \sqrt{\frac{a}{2}} D}$. The Neumann eigenvalue for the Hermite equation is then related to the  eigenvalue of the harmonic oscillator: $\frac{d^2}{ds^2}-s^2 $ with a certain  Robin boundary condition. The following result and its consequence improve the main results of \cite{FS}.

\begin{proposition}\label{ODE1} When $a>0$, the first nonzero Neumann eigenvalue $\bar{\lambda}_a$ is bounded from below by $\frac{a}{2}+\frac{\pi^2}{D^2}$. In particular $\widetilde{\lambda}_1(\Omega)$, with $\Omega$ being a convex domain in any Riemannian manifold with ${\rm Rc}_{ij}+f_{ij}\ge a g_{ij}$, is bounded from below by $\frac{a}{2}+\frac{\pi^2}{D^2}$.
\end{proposition}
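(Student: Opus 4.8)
The plan is as follows. Since the ``in particular'' clause follows at once from Theorem \ref{sgs-thm1}, it suffices to prove the one-dimensional inequality $\bar\lambda_a\ge \tfrac a2+\tfrac{\pi^2}{D^2}$, and I would do this by passing from the first eigenfunction to its derivative and then to Schr\"odinger form. Let $\bar\omega=\bar\omega^D$ be the first non-constant Neumann eigenfunction of $\mathcal L_a$ on $[-\tfrac D2,\tfrac D2]$, chosen (exactly as in the proof of Theorem \ref{sgs-thm1}) to be odd with $\bar\omega'>0$ on $(0,\tfrac D2)$, so that $\bar\omega''-as\,\bar\omega'+\bar\lambda_a\bar\omega=0$ and $\bar\omega'(\pm\tfrac D2)=0$. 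Differentiating the equation, $y:=\bar\omega'$ will satisfy $y''-as\,y'+(\bar\lambda_a-a)\,y=0$ on $[0,\tfrac D2]$, with $y'(0)=0$ (evenness of $\bar\omega'$), $y(\tfrac D2)=0$ (the Neumann condition), and $y>0$ on $[0,\tfrac D2)$.

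Next I would absorb the Gaussian weight by setting $v:=\E^{-as^2/4}y$; a short computation shows this turns the equation for $y$ into $-v''+\tfrac{a^2s^2}{4}v=(\bar\lambda_a-\tfrac a2)v$ on $[0,\tfrac D2]$, with $v'(0)=0$, $v(\tfrac D2)=0$ and $v>0$ on $[0,\tfrac D2)$. (This is precisely the normalization mentioned just before the statement: rescaling $\mathcal L_a$ to the Hermite operator on $[-\sqrt{a/2}\,\tfrac D2,\sqrt{a/2}\,\tfrac D2]$ leaves the harmonic oscillator $-\tfrac{d^2}{ds^2}+s^2$, here with a Neumann end and a Dirichlet end produced by differentiating an odd Neumann eigenfunction.) Then I would multiply the equation for $v$ by $v$, integrate over $[0,\tfrac D2]$ and integrate by parts: both boundary contributions vanish because $v'(0)=0$ and $v(\tfrac D2)=0$, yielding $\int_0^{D/2}\big((v')^2+\tfrac{a^2s^2}{4}v^2\big)\,ds=(\bar\lambda_a-\tfrac a2)\int_0^{D/2}v^2\,ds$. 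Discarding the nonnegative potential term and invoking the sharp Poincar\'e inequality $\int_0^{D/2}(v')^2\,ds\ge \tfrac{\pi^2}{D^2}\int_0^{D/2}v^2\,ds$ for $W^{1,2}$ functions vanishing at $\tfrac D2$ (with equality for $\cos(\pi s/D)$, the first Neumann--Dirichlet eigenfunction of $-\tfrac{d^2}{ds^2}$ on $[0,\tfrac D2]$), and using $\int_0^{D/2}v^2>0$, I would conclude $\bar\lambda_a-\tfrac a2\ge \tfrac{\pi^2}{D^2}$, which is the asserted bound.

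The computation is short, so there is no single laborious step; the one genuinely load-bearing decision — and the place where I expect any difficulty to lie — is to work with $y=\bar\omega'$ rather than with $\bar\omega$ itself. Differentiating is what turns the Neumann condition at $\tfrac D2$ into a Dirichlet condition for $y$, and hence (after the substitution) yields the Poincar\'e constant $\tfrac{\pi^2}{D^2}$; running the same argument directly on $\bar\omega$ leaves a Robin condition at the far endpoint and only gives the strictly weaker bound $\bar\lambda_a\ge \tfrac{\pi^2}{D^2}-\tfrac a2$. I would also be careful to verify that the boundary terms after the $\E^{-as^2/4}$ substitution genuinely drop (they do, thanks to $v'(0)=0=v(\tfrac D2)$) and that $v$ is an admissible nonzero test function — this is where $\bar\omega'>0$ and $\bar\omega'(0)\ne 0$, established in the proof of Theorem \ref{sgs-thm1}, enter.
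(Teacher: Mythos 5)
Your proof is correct and follows essentially the same route as the paper: differentiate the odd Neumann eigenfunction so that $y=\bar\omega'$ acquires a Dirichlet condition at the endpoint, pass to Schr\"odinger form via a Gaussian substitution, and compare with the Poincar\'e constant $\pi^2/D^2$. The only (cosmetic) differences are that the paper first normalizes to $a=2$, works on the full symmetric interval with Dirichlet conditions at both ends, and cites Corollary 6.4 of \cite{N-expandMo} for the final step $\lambda_0+1\ge\pi^2/D^2$, whereas you work directly with general $a$ on the half-interval $[0,D/2]$ and spell out the elementary step of discarding the nonnegative potential $\tfrac{a^2s^2}{4}$ and invoking the mixed Neumann--Dirichlet Poincar\'e inequality.
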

\begin{proof} By the above renormalization procedure, it is enough to prove the result for the operator
$\frac{d^2}{ds^2}-2s\frac{d}{ds}$ on interval $[-\frac{D}{2}, \frac{D}{2}]$. Let $\phi$ be the first eigenfunction which is  odd. Let $y=\phi'$ and denote $\lambda$ the first (nonzero) Neumann value. Then direct calculation shows that
$$
\left(\E^{-s^2}y'\right)'=-(\lambda-2)\E^{-s^2 }y.
$$
Multiply $y$ on both sides  of the above equation and integrate the resulting equation on $[-\frac{D}{2}, \frac{D}{2}]$.
The fact $y=0$ on the boundary implies that
$$
\int_{-D/2}^{D/2} \E^{-s^2}(y')^2=(\lambda-2)\int_{-D/2}^{D/2}  \E^{-s^2}y^2.
$$
In the view that $y$ vanishes on the boundary, it implies that
$\lambda-2\ge \lambda_0$, the first Dirichlet eigenvalue of the operator $\frac{d^2}{ds^2}-2s\frac{d}{ds}$. Now we may introduce the the tranformation $w=\E^{-\frac{s^2}{2}}\varphi$. Direct calculation shows that  $\varphi$ is the first Dirichlet eigenfunction if and only if
$$
\frac{d^2}{ds^2} w- s^2 w=-(\lambda_0+1)w
$$
with $w$ vanishes on the boundary. By  Corollary 6.4 of \cite{N-expandMo} we have that
$$
\lambda_0+1\ge \frac{\pi^2}{D^2}.
$$
Combining them together we have that $\lambda\ge 1+\frac{\pi^2}{D^2}$. Scaling will give the claimed result.
\end{proof}

\begin{corollary}\label{con-FS-improve} If $(M, g, f)$ is a nontrivial gradient shrinking soliton satisfying (\ref{sgs}) with $a>0$. Then
$$
\operatorname{Diameter}(M, g)\ge \sqrt{\frac{2}{3a}}\pi.
$$
\end{corollary}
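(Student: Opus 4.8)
The plan is to turn the soliton structure into an explicit eigenfunction of $\Delta_f$ and then play the resulting upper bound for $\widetilde{\lambda}_1$ against the lower bound of Proposition \ref{ODE1}. We may as well assume $M$ is compact, which is the setting of \cite{FS} (for $M$ complete noncompact the diameter is infinite and there is nothing to prove). The first step is to recall the two classical identities for a gradient Ricci soliton \eqref{sgs}: tracing \eqref{sgs} gives $\Delta f = na - R$, while taking a divergence of \eqref{sgs} and invoking the contracted second Bianchi identity shows that $R + |\nabla f|^2 - 2af$ is constant on $M$. Subtracting these, $\Delta_f f = \Delta f - |\nabla f|^2 = -2af + b$ for some constant $b$, so that the function $h \doteqdot f - \frac{b}{2a}$ satisfies $\Delta_f h = -2a\,h$.

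Next I would verify that $h$ is an admissible competitor in the variational characterization of $\widetilde{\lambda}_1$. Since $(M,g,f)$ is nontrivial, $f$ — hence $h$ — is nonconstant. Integrating $\Delta_f h = -2ah$ against the weighted measure $\E^{-f}\,dVol(g)$ and using that $\Delta_f$ is self-adjoint with respect to it, so that $\int_M \Delta_f h\,\E^{-f}\,dVol(g) = 0$, shows $\int_M h\,\E^{-f}\,dVol(g) = 0$, i.e. $h$ has zero weighted average. Multiplying $\Delta_f h = -2ah$ by $h$, integrating, and integrating by parts gives $\int_M |\nabla h|^2\,\E^{-f}\,dVol(g) = 2a\int_M h^2\,\E^{-f}\,dVol(g)$, so the Rayleigh quotient of $h$ equals $2a$ and hence $\widetilde{\lambda}_1 \le 2a$. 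On the other hand Proposition \ref{ODE1}, applied to the compact manifold $M$ (which satisfies ${\rm Rc}_{ij}+f_{ij}\ge a\,g_{ij}$, indeed with equality), gives $\widetilde{\lambda}_1 \ge \frac{a}{2} + \frac{\pi^2}{D^2}$, where $D = \operatorname{Diameter}(M,g)$. Combining the two inequalities yields $\frac{a}{2} + \frac{\pi^2}{D^2} \le 2a$, i.e. $\frac{\pi^2}{D^2} \le \frac{3a}{2}$, which rearranges to $D \ge \sqrt{\frac{2}{3a}}\,\pi$.

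The only step that is not immediate is the first one, producing the eigenfunction from the soliton equation; but both identities used there are entirely classical, so I do not expect a genuine obstacle. The place where the hypothesis "nontrivial" is essential is precisely the assertion $h \not\equiv \text{const}$: this is what makes $h$ a legitimate (nonzero, mean-zero) test function and keeps the bound $\widetilde{\lambda}_1 \le 2a$ from being vacuous. It is also worth noting that the argument uses only $\widetilde{\lambda}_1 \le 2a$, not that $2a$ is the first eigenvalue, so no spectral-gap information about $h$ is needed and nothing more delicate than the Rayleigh-quotient inequality is required.
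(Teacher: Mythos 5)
Your proof is correct and follows the same route as the paper: combine the lower bound $\widetilde{\lambda}_1\geq\frac{a}{2}+\frac{\pi^2}{D^2}$ from Proposition~\ref{ODE1} with the fact that $2a$ is an eigenvalue of $\Delta_f$ on a nontrivial shrinking soliton. The only difference is that you rederive that eigenvalue fact from the classical soliton identities, whereas the paper simply cites it as Lemma~2.1 of \cite{FS}; your computation ($\Delta_f(f-\tfrac{b}{2a})=-2a(f-\tfrac{b}{2a})$, nonconstancy from nontriviality, zero weighted mean, Rayleigh quotient $=2a$) is a correct proof of that cited lemma.
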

\begin{proof}
The result follows from the above lower estimate on the first Neumann eigenvalue, applying to the case that $\Omega=M$,  and the observation, Lemma 2.1 of \cite{FS}, that $2a$ is an eigenvalue of the operator $\Delta -\langle \nabla f, \nabla (\cdot)\rangle$.
\end{proof}

This result clearly is not sharp.   A better eigenvalue lower bound (and hence a better diameter lower bound) will follow from a better understanding of the first Dirichlet eigenvalue of the harmonic oscillator.  We investigate this in the next section.

We  remark that Proposition \ref{ODE1} also implies that for a compact Riemannian manifold $(M, g)$ satisfying ${\rm Rc}\ge (n-1)K$ for some $K>0$, the estimate $\lambda_1(M)\ge \frac{n-1}{2}K+\frac{\pi^2}{D^2}$ holds with $D$ being its diameter. This improves the earlier corresponding works in \cite{Ln}, \cite{Ya}, etc.

\section{The harmonic oscillator on bounded symmetric intervals}
\label{sec:EHO}
In this section we will investigate the sharp lower bound given by the eigenvalue of the one-dimensional harmonic oscillator on a bounded symmetric interval:   Recall from section \ref{sec:linear} that the first Neumann eigenvalue $\bar\lambda_{2,D}$ is equal to $\hat\lambda_{1,D}+1$, where $\hat\lambda_{b,D}$ is defined by the existence of a solution of the eigenvalue problem
\begin{align*}
w''+\left(\hat\lambda_{b,D}-bs^2\right)w&=0,\ s\in[-D/2,D/2];\\
w(D/2)=w(-D/2) &=0;\\
w(x)&>0,\ s\in(-D/2,D/2).
\end{align*}
The solution of the ordinary differential equation $w''-s^2w+\lambda w=0$ (with $w'(0)=0$), which is also called Weber's equation,  can be written in terms of confluent hypergeometric functions:  We have
$$
w(s) = \E^{-s^2}U\left(\frac14-\frac{\lambda}{8},\frac12,2s^2\right)
$$
where $U$ is the confluent hypergeometric function of the first kind.  Thus $\hat\lambda_{1,D}$ is the first root of the equation $U\left(\frac14-\frac{\lambda}{8},\frac12,\frac{D^2}{2}\right)=0$.  Since $U$ is strictly monotone in the first argument, the solution is an analytic function of $D$.

Noting that $\hat\lambda_{1,D}=\frac{\pi^2}{D^2}\hat\lambda_{\frac{D^4}{\pi^4},\pi}$,
we use a perturbation argument to compute the Taylor expansion for $\hat\lambda_{b,\pi}$ as a function of $b=\frac{a^2}{4}$ about $b=0$ (this provides an expansion for $\hat\lambda_{1,D}$ about $D=0$).
That is, we consider the solution $u$ of the eigenvalue problem
\begin{align*}
u''+\left(\lambda-bs^2\right)u&=0,\ s\in[-\pi/2,\pi/2];\\
u(\pi/2)=u(-\pi/2)&=0;\\
u(s)&>0,\ s\in(-\pi/2,\pi/2).
\end{align*}
The solution for $b=0$ is of course given by $u(s)=\cos(s)$.  The perturbation expansion produces a solution of the form
$$
u(s,b) = \sum_{k=0}^\infty b^k\sum_{j=0}^{3k}\left(\alpha_{k,j}s^j\cos s+\beta_{k,j}s^k\sin s\right),
$$
with $\lambda=\sum_{k=0}^\infty\lambda_kb^k$.  This expansion is unique provided we specify that $u$ is even, $\alpha_{0,0}=1$, $\beta_{0,1}=0$, and $\alpha_{k,0}=\beta_{k,0}=0$ for $k>0$.  The first few terms in the expansion for $\lambda$ are given by
\begin{align*}
\hat\lambda_{b,\pi} &= 1+\left(\frac{\pi^2}{12}-\frac12\right)b
+\left(\frac{\pi^4}{720}-\frac{5\pi^2}{48}+\frac78\right)b^2
+\left(\frac{\pi^6}{30240}-\frac{\pi^4}{48}+\frac{31\pi^2}{32}-\frac{121}{16}
\right)b^3\\
&\quad\null+\left(\frac{\pi^8}{362880}-\frac{\pi^6}{270}+\frac{683 \pi^4}{1280}-\frac{14573 \pi^2}{768}+\frac{17771}{128}\right)b^4+O(b^5).
\end{align*}

\begin{figure}\label{fig:Weber}
\includegraphics[scale=0.5]{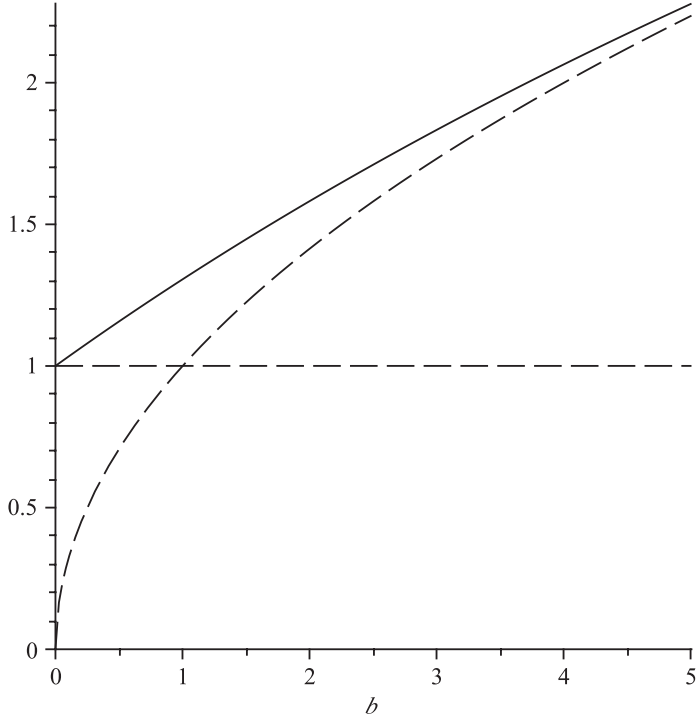}
\caption{The eigenvalue $\hat\lambda_{b,\pi}$ for Weber's equation $y''+(\lambda-bs^2)y=0$, $y(\pm\pi/2)=0$ [solid curve]; shown also are the lower bounds $\hat\lambda\geq 1$ and $\hat\lambda\geq\sqrt{b}$ [dashed curves].}
\end{figure}

We note that there is also a useful lower bound for $\hat\lambda_{b,\pi}$, which we can arrive at as follows:  The inclusion of $[-D/2,D/2]$ in $\RR$ implies $\hat\lambda_{1,D}\geq \lim_{d\to\infty}\hat\lambda_{1,d}=1$, with eigenfunction $u(s) = \E^{-s^2/2}$.  Therefore we also have
$$
\hat\lambda_{b,\pi}=\sqrt{b}\hat\lambda_{1,\pi b^{1/4}}\geq\sqrt{b}.
$$

This translates to an estimate for the drift eigenvalue $\bar\lambda_{a,\pi}$ appearing in Theorem \ref{sgs-thm1}:  We have $\bar\lambda_{a,\pi} = \frac{a}{2}+\hat\lambda_{a^2/4,\pi}$, giving the following Taylor expansion:
\begin{align*}
\bar\lambda_{a,\pi} &= 1+\frac{a}{2}+\left(\frac{\pi^2}{12}-\frac12\right)\frac{a^2}{4}
+\left(\frac{\pi^4}{720}-\frac{5\pi^2}{48}+\frac78\right)\frac{a^4}{16}
+\left(\frac{\pi^6}{30240}-\frac{\pi^4}{48}+\frac{31\pi^2}{32}-\frac{121}{16}
\right)\frac{a^6}{64}\\
&\quad\null+\left(\frac{\pi^8}{362880}-\frac{\pi^6}{270}+\frac{683 \pi^4}{1280}-\frac{14573 \pi^2}{768}+\frac{17771}{128}\right)\frac{a^8}{256}+O(a^{10}).
\end{align*}
In particular the lower bound $\hat\lambda\geq 1$ translates to $\bar\lambda\geq 1+a/2$, and the lower bound $\hat\lambda\geq\sqrt{b}$ translates to $\bar\lambda\geq a/2+\sqrt{a^2/4}=a$.  Finally, by scaling we obtain the following:
\begin{align*}
\bar\lambda_{a,D} &= \frac{\pi^2}{D^2}+\frac{a}{2}+\left(\frac{\pi^2}{12}-\frac12\right)\frac{D^2a^2}{4\pi^2}
+\left(\frac{\pi^4}{720}-\frac{5\pi^2}{48}+\frac78\right)\frac{D^6a^4}{16\pi^6}\\
&\quad\null
+\left(\frac{\pi^6}{30240}-\frac{\pi^4}{48}+\frac{31\pi^2}{32}-\frac{121}{16}
\right)\frac{D^{10}a^6}{64\pi^10}\\
&\quad\null+\left(\frac{\pi^8}{362880}-\frac{\pi^6}{270}+\frac{683 \pi^4}{1280}-\frac{14573 \pi^2}{768}+\frac{17771}{128}\right)\frac{D^{14}a^8}{256\pi^{14}}\\
&\quad\null+O(D^{18}a^{10})\quad\text{\rm as}\ D^2a\to 0.
\end{align*}

An interesting consequence of the Taylor  expansion (combined with the fact that the estimate $\tilde\lambda_1\geq \bar\lambda_{a,D}$ is sharp as proved in section \ref{sec:sharp}) is the following:

\begin{proposition}
The constant $a/2$ in the lower bound $\tilde\lambda_1\geq \frac{\pi^2}{D^2}+\frac{a}{2}$ is the largest possible.
\end{proposition}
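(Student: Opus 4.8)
The plan is to read the statement as: $1/2$ is the largest constant $\mu$ for which $\tilde\lambda_1 \ge \frac{\pi^2}{D^2} + \mu a$ holds for every Bakry-Emery manifold $(M,g,f)$ with ${\rm Rc}_{ij}+f_{ij} \ge a g_{ij}$, $a>0$, and diameter $D$ (equivalently, that near any fixed positive $a$ the term $\frac a2$ cannot be replaced by anything larger). That $\mu=\frac12$ is admissible is already established --- it is Proposition \ref{ODE1} together with Theorem \ref{sgs-thm1}. So the real content is that $\mu>\frac12$ is impossible, which I would obtain by feeding the sharpness construction of section \ref{sec:sharp} into the hypothetical improved bound and then using the small-diameter behaviour of $\bar\lambda_{a,D}$ coming from the Taylor expansion computed above.

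Concretely, fix $a>0$ and suppose for contradiction that $\tilde\lambda_1 \ge \frac{\pi^2}{D^2} + \mu a$ holds universally for some $\mu>\frac12$. For each small $D>0$ and each $\varepsilon>0$, section \ref{sec:sharp} produces (for $n\ge 3$, this $a$) a Bakry-Emery manifold with ${\rm Rc}_f\ge a g$, diameter $D$ up to an error tending to $0$ as the construction's parameters $r,\delta$ shrink, and $\tilde\lambda_1 \le \bar\lambda_{a,D}+\varepsilon$ once $r,\delta$ are small enough; the diameter error is harmless since $\bar\lambda_{a,D}$ is continuous (indeed analytic) in $D$. The hypothetical bound applied to this manifold then gives $\frac{\pi^2}{D^2}+\mu a \le \bar\lambda_{a,D}+\varepsilon$. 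Now I only need the qualitative consequence of the Taylor expansion that $\bar\lambda_{a,D} = \frac{\pi^2}{D^2} + \frac a2 + o(1)$ as $D\to 0$ with $a$ fixed (the leading correction is a positive multiple of $D^2a^2$). Substituting, cancelling $\frac{\pi^2}{D^2}$, and letting first $D\to 0$ and then $\varepsilon\to 0$ yields $\mu a \le \frac a2$, i.e. $\mu\le\frac12$, a contradiction. Since dimension-$3$ examples already obstruct any universal improvement, this proves the proposition.

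The steps, in order, are: (1) fix the precise meaning of ``largest''; (2) recall that $\mu=\frac12$ works; (3) insert the sharpness family into the assumed improved inequality; (4) use $\bar\lambda_{a,D}=\frac{\pi^2}{D^2}+\frac a2+o(1)$ as $D\to 0$ to close the contradiction. I expect the only genuine care to lie in step (3): one must run the construction so that, for the \emph{actual} diameter of the constructed manifold, $\tilde\lambda_1$ stays within $\varepsilon$ of $\bar\lambda_{a,D}$ in a way that survives the limit $D\to 0$ --- that is, the two small parameters $r,\delta$ should be chosen as controlled functions of $D$ and $\varepsilon$. This is bookkeeping already implicit in section \ref{sec:sharp}, not a new obstacle, and beyond it the argument uses nothing but the Taylor expansion and Theorem \ref{sgs-thm1}.
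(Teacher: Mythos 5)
Your proposal is correct and is essentially the paper's one-line argument ("this follows from the Taylor expansion for small values of $aD^2$") spelled out carefully: combine the sharpness examples of section \ref{sec:sharp} with the expansion $\bar\lambda_{a,D}=\frac{\pi^2}{D^2}+\frac{a}{2}+O(D^4a^2/D^2)$ and let $aD^2\to 0$ to rule out any $\mu>\frac12$. The bookkeeping you flag (choosing $r,\delta$ as controlled functions of $D,\varepsilon$) is indeed implicit in the paper's construction and is not a new obstacle.
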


This follows from the Taylor expansion for small values of $aD^2$.

We note that the sharp diameter bound (given by the value of $a$ where the dotted line $\lambda=2a$ intersects with the solid curve in Figure \ref{fig:drift}) is not dramatically different
from the one given in Corollary \ref{con-FS-improve} (where the dotted line intersects the dashed line $\lambda=1+a/2$).  Since the eigenvalue estimate $\tilde\lambda_1\geq \bar\lambda_{a,D}$ appears from the examples in section \ref{sec:sharp} to be sharp only in situations which are far from gradient solitons, we expect that neither of these diameter bounds is
close to the sharp lower diameter bound for a nontrivial gradient Ricci soliton.

\begin{figure}
\includegraphics[scale=0.5]{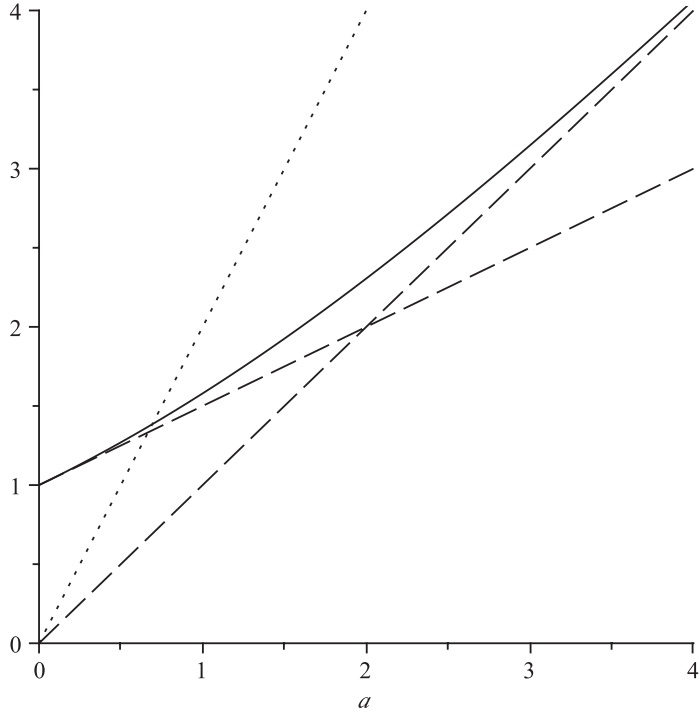}
\caption{The eigenvalue $\bar\lambda_{a,\pi}$ for the drift Laplacian equation $y''-asy'+\lambda y=0$, $y'(\pm\pi/2)=0$ [solid curve]; shown also are the lower bounds $\bar\lambda\geq 1+\frac{a}{2}$ and $\bar\lambda\geq a$ [dashed lines], and the line $\bar\lambda=2a$ corresponding to non-Einstein gradient Ricci solitons [dotted line].}\label{fig:drift}
\end{figure}
\begin{bibdiv}
\begin{biblist}

\bib{AC1}{article}{
   author={Andrews, Ben},
   author={Clutterbuck, Julie},
   title={Lipschitz bounds for solutions of quasilinear parabolic equations
   in one space variable},
   journal={J. Differential Equations},
   volume={246},
   date={2009},
   number={11},
   pages={4268--4283},
   }

\bib{AC2}{article}{
   author={Andrews, Ben},
   author={Clutterbuck, Julie},
   title={Time-interior gradient estimates for quasilinear parabolic
   equations},
   journal={Indiana Univ. Math. J.},
   volume={58},
   date={2009},
   number={1},
   pages={351--380},
  }

\bib{AC3}{article}{
   author={Andrews, Ben},
   author={Clutterbuck, Julie},
   title={Proof of the fundamental gap conjecture},
   journal={J. Amer. Math. Soc.},
   volume={24},
   date={2011},
   number={3},
   pages={899--916},
  }

\bib{FS}{article}{
	author={Futaki, A},
	author={Sano, Y.},
	title={Lower diameter bounds for compact shrinking Ricci solitons},
	eprint={arXiv:1007.1759}}

\bib{LY}{article}{
   author={Li, Peter},
   author={Yau, Shing Tung},
   title={Estimates of eigenvalues of a compact Riemannian manifold},
   conference={
      title={Geometry of the Laplace operator (Proc. Sympos. Pure Math.,
      Univ. Hawaii, Honolulu, Hawaii, 1979)},
   },
   book={
      series={Proc. Sympos. Pure Math., XXXVI},
      publisher={Amer. Math. Soc.},
      place={Providence, R.I.},
   },
   date={1980},
   pages={205--239},
   }

\bib{Ln}{article}{
   author={Ling, Jun},
   title={The first eigenvalue of a closed manifold with positive Ricci
   curvature},
   journal={Proc. Amer. Math. Soc.},
   volume={134},
   date={2006},
   number={10},
   pages={3071--3079},
  }

\bib{N-expandMo}{article}{
	author={Ni, Lei},
	title={Estimates on the modulus of expansion for vector fields solving nonlinear equations},
	eprint={ arXiv:1107.2351}}

\bib{PW}{article}{
   author={Payne, L. E.},
   author={Weinberger, H. F.},
   title={An optimal Poincar\'e inequality for convex domains},
   journal={Arch. Rational Mech. Anal.},
   volume={5},
   date={1960},
   pages={286--292},
  }

\bib{Ya}{article}{
      author={Yang, DaGang},
   title={Lower bound estimates of the first eigenvalue for compact manifolds with positive Ricci curvature},
   journal={Pacific J. Math.},
   volume={190},
   date={1999},
   number={2},
   pages={383--398},
  }

\bib{ZY}{article}{
   author={Zhong, Jia Qing},
   author={Yang, Hong Cang},
   title={On the estimate of the first eigenvalue of a compact Riemannian
   manifold},
   journal={Sci. Sinica Ser. A},
   volume={27},
   date={1984},
   number={12},
   pages={1265--1273},
  }

\end{biblist}
\end{bibdiv}

\end{document}